\newtheorem{theorem}{Theorem}[section]
\newtheorem{proposition}[theorem]{Proposition}
\newtheorem{lemma}[theorem]{Lemma}
\newtheorem{corollary}[theorem]{Corollary}
\theoremstyle{definition}
\theoremstyle{remark}
\numberwithin{equation}{section}
\begin{document}

\title[holomorphic extension theorem on hyperconvex domains]{An Extension theorem of holomorphic functions on hyperconvex domains}

\author{Seungjae Lee}
\address{Department of Mathematics, Pohang University of Science and Technology, Pohang, 790-784,
Republic of Korea}
\email{seungjae@postech.ac.kr}
\thanks{}

\author{Yoshikazu Nagata}
\address{Graduate School of Mathematics, Nagoya University, Furocho,
Chikusaku, Nagoya, 464-8602, Japan}
\email{m10035y@math.nagoya-u.ac.jp}

\subjclass[2010]{Primary 32A10, 32D15, 32U10.}

\date{}

\dedicatory{}

\keywords{Hartogs extension theorem, Plurisubharmonic functions, Donnelly-Fefferman type estimate, Serre duality.}

\commby{}

\begin{abstract}
Let $n \geq 3$ and $\Omega$ be a bounded domain in $\mathbb{C}^n$ with a smooth negative plurisubharmonic exhaustion function $\varphi$. As a generalization of Y. Tiba's result, we prove that any holomorphic function on a connected open neighborhood of the support of $(i\partial \bar \partial \varphi )^{n-2}$ in $\Omega$  can be extended to the whole domain $\Omega$. To prove it, we combine an $L^2$ version of Serre duality and Donnelly-Fefferman type estimates on $(n,n-1)$- and $(n,n)$- forms.
\end{abstract}

\maketitle

\section{Introduction}

In this article, we study a kind of the Hartogs extension theorem, which appears in Y. Tiba's paper \cite{Tiba}. The Hartogs extension theorem states that any holomorphic function on $\Omega \setminus K$, where $\Omega$ is a domain in $\mathbb{C}^n$, $n>1$, $K$ is a compact set in $\Omega$
and $\Omega\setminus K$ is connected, extends holomorphically on the whole domain $\Omega$.

This phenomenon is different from the case of the function theory of one complex variable, and have become a starting point of the function theory of several complex variables. For the several complex variables, the notion of the (strict) pseudoconvexity for the boundary
of a given domain have become crucial. Let $\Omega$ be a smoothly bounded pseudoconvex domain. Denote by $A(\Omega)$ the uniform algebra of functions that are holomorphic on $\Omega$ and continuous on $\overline{\Omega}$. The Shilov boundary of $A(\Omega)$ is the smallest closed subset $S(\Omega)$ in $\partial \Omega$ on which the maximum value of $|f|$ coincides with that on $\overline{\Omega}$ for every function $f$ in $A(\Omega)$. In fact, the Shilov boundary of $A(\Omega)$ is the closure of the set of strictly pseudoconvex boundary points of $\Omega$ (see \cite{Basener}).
By \cite{Sibony}, it is known that any holomorphic function $f$ on $\overline{\Omega}$ can be represented as $f(x)=\int f(z) d\mu_x(z)$ where $d\mu_x$ is a measure supported on the Shilov boundary $S(\Omega)$.

Assume further that $\Omega$ has a negative smooth plurisubharmonic function $\varphi$ on $\Omega$ such that $\varphi \rightarrow 0$ when $z \rightarrow \partial \Omega$. Denote by $\textup{Supp}(i \partial \bar \partial \varphi)^{k}$ the support of $(i \partial \bar \partial \varphi)^{k}$. By  \cite{Bedford}, it can be shown that, for small $\epsilon >0$, the Shilov boundary $S(\Omega_{\epsilon})$ of $\Omega_{\epsilon} = \{ \varphi <-\epsilon \}$ is a subset of $\textup{Supp}(i\partial \bar \partial \varphi)^{n-1}$.

In this context, it is natural to ask whether any holomorphic function on the support of $(i \partial \bar \partial \varphi)^{n-1}$ can be extended to the whole domain $\Omega$. With this motivation, Y. Tiba proved the following theorem in \cite{Tiba}.

\begin{theorem}
Let $\Omega$ be a bounded domain in $\mathbb{C}^n$, $n \geq 4$. Suppose that $\varphi \in C^{\infty}(\Omega)$ is a negative plurisubharmonic function which satisfies $\varphi(z) \rightarrow 0$ as $z \rightarrow \partial \Omega$. Let $V$ be an open connected neighborhood of
$\textup{Supp}(i \partial \bar \partial \varphi)^{n-3}$ in $\Omega$. Then any holomorphic function on $V$ can be extended to $\Omega$.
\end{theorem}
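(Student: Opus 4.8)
The plan is to reduce the extension statement to the solvability of a $\bar\partial$-equation on $\Omega$ carrying a support (vanishing) constraint, and then to extract that solvability from an $L^2$ a priori estimate that I would produce by a Donnelly--Fefferman type twist, transported to top-degree forms through $L^2$ Serre duality, which is the natural place to read off positivity. First I would fix $f\in\mathcal O(V)$ and choose $\chi\in C^\infty(\Omega)$ with $\chi\equiv1$ on a neighborhood of $\Omega\setminus V$ and $\chi\equiv0$ on a fixed nonempty open set $U_0\Subset V$, arranged so that $\textup{supp}(\bar\partial\chi)\Subset V\setminus U_0$. Then $g:=-f\,\bar\partial\chi=\bar\partial\big((1-\chi)f\big)$ is a smooth $\bar\partial$-closed $(0,1)$-form supported in the shell $V\setminus U_0$, and $(1-\chi)f$ extends by $0$ to a smooth function on $\Omega$. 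If I can find $u$ with $\bar\partial u=g$ on $\Omega$ and $u\equiv0$ on $U_0$, then $F:=(1-\chi)f-u$ is holomorphic on $\Omega$ and equals $f$ on $U_0$; since $\Omega$ and $V$ are connected, the identity theorem upgrades this to $F=f$ throughout $V$, so $F$ is the desired extension. The one structural subtlety to keep in mind is that a plurisubharmonic weight alone can never force $u$ to vanish on the open set $U_0$ (a psh function is $-\infty$ only on a pluripolar set), so the vanishing must be produced as a genuine support condition rather than as high-order flatness.

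Accordingly, I would phrase ``solve $\bar\partial u=g$ with $\textup{supp}\,u\subset\Omega\setminus U_0$'' as a problem in $L^2$-cohomology with supports, whose solvability is, by $L^2$ Serre duality, equivalent to a closed-range a priori estimate for $\bar\partial$ acting from $(n,n-1)$- to $(n,n)$-forms, the Serre conjugates of the $(0,1)$- and $(0,0)$-levels. Concretely, I would test the functional $\alpha\mapsto\int_\Omega g\wedge\alpha$ against $\bar\partial$-closed $(n,n)$-forms; the conclusion to be reached is that every $\alpha$ representing the obstruction is $\bar\partial$-exact with control, i.e.\ a uniform bound $\big|\int_\Omega g\wedge\alpha\big|\le C\,\|\vartheta\alpha\|$ in the weighted norm, where $\vartheta$ is the weighted formal adjoint of $\bar\partial$. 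The support constraint on $u$ is exactly what the dual side records by admitting only test forms with the correct behavior on $U_0$.

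The a priori estimate itself I would derive from a twisted Bochner--Kodaira--Nakano identity with weight $e^{-\phi}$, $\phi$ strictly plurisubharmonic, and twist factor $\eta$ comparable to $-\varphi$ (or to $-\log(-\varphi)$), so that the self-bounded-gradient property of $-\log(-\varphi)$ absorbs the gradient error $i\partial\eta\wedge\bar\partial\eta/\eta$ with a constant independent of how singular the weight is made near $U_0$; this uniformity is precisely why Donnelly--Fefferman is invoked in place of plain H\"ormander. For $(n,n-1)$-forms the curvature term of a psh weight is automatically non-negative, so positivity over $V$ is free, and the difficulty concentrates on $\Omega\setminus V$, where $(i\partial\bar\partial\varphi)^{n-3}=0$ forces the Levi form of $\varphi$ to have rank at most $n-4$. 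Here I expect the hypothesis to be used to dominate exactly the error terms created when the $\varphi$-built twist is differentiated: the $i\partial\bar\partial\varphi$-contribution has rank $\le n-4$, and for top-degree $(n,n-1)$-forms this should leave enough room (which is where the assumption $n\ge4$ enters) for the twisted curvature to stay controlled by the $\vartheta$-norm.

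I expect the main obstacle to be precisely this estimate over the degenerate region: showing that the twist together with the rank-$(n-4)$ bound closes the $(n,n-1)$-form estimate with a constant uniform in the singular parameter, and verifying that closed range genuinely holds so that the estimate yields an actual solution and not merely an inequality. The second delicate point is to confirm that the solution produced is truly zero on the open set $U_0$, that is, that the support condition survives the duality and limiting arguments; this, combined with the curvature bookkeeping over the non-pseudoconvex region $\Omega\setminus\overline{U_0}$, is where I anticipate the real work to lie.
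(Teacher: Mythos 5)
Your overall toolkit (cut-off function, weighted $\bar\partial$-equation, Donnelly--Fefferman twisting, $L^2$ Serre duality) is the same as the paper's, but the way you propose to assemble it has a genuine gap that I do not think can be closed, and the hypothesis on $\textup{Supp}(i\partial\bar\partial\varphi)^{n-3}$ is used in the wrong place.

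The central problem is that your normalization of the solution is circular. You set $g=\bar\partial\big((1-\chi)f\big)$ and ask for a solution of $\bar\partial u=g$ with $u\equiv 0$ on the open set $U_0\Subset V$. But $(1-\chi)f$ is itself a solution of this equation, so every solution has the form $(1-\chi)f-h$ with $h\in\mathcal O(\Omega)$, and a solution vanishing on $U_0$ exists if and only if $h=f$ on $U_0$ for some $h\in\mathcal O(\Omega)$ --- that is, if and only if $f|_{U_0}$ already extends holomorphically to $\Omega$. Recasting this as $L^2$-cohomology with supports in $\Omega\setminus U_0$ and dualizing does not escape the circle: the group you would need to kill is exactly the obstruction space for the extension, and no twisted Bochner--Kodaira estimate will make it vanish for an arbitrary interior open set $U_0$ (a weight can only force vanishing on a pluripolar set, as you yourself note). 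The paper's mechanism is different and is the real content of the argument: the solution $u$ is produced in $L^2_{0,0}(\Omega,e^{\psi-\delta\phi},\omega)$ for the \emph{complete} metric $\omega=i\partial\bar\partial(\tfrac{1}{2n}\psi+\phi)$, and near a strictly pseudoconvex boundary point the volume form of $\omega$ blows up like $(-\varphi)^{-(n+1)}$, so finiteness of the weighted norm gives $\int|u|^2(-\varphi)^{\delta-(n+1)}dV<\infty$ with exponent $\le -1$; Lemma 4.4 (an Ohsawa-type slicing argument) then forces the holomorphic function $u$ to vanish on a one-sided neighborhood of that boundary point, which is where the extension is glued to $f$. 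Your proposal contains no substitute for this step.

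The second issue is where the Monge--Amp\`ere hypothesis enters. It is not used to salvage curvature positivity over $\Omega\setminus V$: the Donnelly--Fefferman estimate (Lemma 3.1) holds on all of $\Omega$ for $(n,q)$-forms with $0<\delta<q$ using only plurisubharmonicity of $\varphi$ and strict plurisubharmonicity of the auxiliary $\psi$, with no rank condition. The hypothesis is used to show that the datum lies in the correct weighted space: one must choose the cut-off equal to $1$ on $\textup{Supp}(i\partial\bar\partial\varphi)^{n-3}$ and $0$ outside a slightly larger neighborhood, so that $\bar\partial\chi$ is supported where $(i\partial\bar\partial\varphi)^{n-3}=0$; there the integrability lemma (Lemma 4.2, with $k=n-3$) gives $e^{-\delta\phi}dV_\omega\in L^1$ for $\delta>n-3$, hence $f\bar\partial\chi\in L^2_{0,1}(\Omega,e^{\psi-\delta\phi},\omega)$, while the dual vanishing theorem requires $\delta<n-1$; the nonempty window $n-3<\delta<n-1$ is the entire numerology of the theorem. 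Your cut-off is oriented the other way ($\bar\partial\chi$ supported in a shell inside $V$, where the Monge--Amp\`ere power need not vanish and which may approach $\partial\Omega$), so even the convergence of the pairing $\int_\Omega g\wedge\alpha$ is not available, and the $\delta$-window never appears.
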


In \cite{Tiba}, Y. Tiba proved Donnelly-Fefferman type estimates for $(0,1)$- and $(0,2)$-forms and used them for establishing suitable
$L^2$ estimates of $\bar\partial$-equations. In this process, the Donnelly-Fefferman type estimate of $(0,2)$-form and an integrability condition contribute to appear the restriction of the power $n-3$ in Theorem 1.1.

In this article, we use the Donnelly-Fefferman type estimates for $(n,n)$- and $(n,n-1)$- forms rather than $(0,1)$- and $(0,2)$-forms. In this case, the restriction of $n-3$ is changed by $n-2$, and it improves Theorem 1.1. Finally, using dualities between $L^2$-Dolbeault cohomologies, in the same way as \cite{Ohsawa, Ohsawa1}, we can simplify Y. Tiba's proof and obtain the generalized result:

\begin{theorem}
Let $\Omega$ be a bounded domain in $\mathbb{C}^n$, $n \geq 3$. Suppose that $\varphi \in C^{\infty}(\Omega)$ is a negative plurisubharmonic function which satisfies $\varphi(z) \rightarrow 0$ as $z \rightarrow \partial \Omega$. Let $V$ be an open connected neighborhood of
$\textup{Supp}(i \partial \bar \partial \varphi)^{n-2}$ in $\Omega$.
Then any holomorphic function on $V$ can be extended to $\Omega$.
\end{theorem}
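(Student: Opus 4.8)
The plan is to run the classical $\bar\partial$-mechanism for Hartogs-type extension, but to carry out the decisive $\bar\partial$-solvability on the dual bidegrees $(n,n-1)$ and $(n,n)$ rather than on $(0,1)$ and $(0,2)$. Fix a holomorphic $f$ on $V$. Since the Shilov boundaries of the sublevel sets accumulate on $\partial\Omega$ and sit inside $\textup{Supp}(i\partial\bar\partial\varphi)^{n-1}\subseteq\textup{Supp}(i\partial\bar\partial\varphi)^{n-2}$, the neighborhood $V$ contains a neighborhood of $\partial\Omega$, so $K:=\Omega\setminus V$ is relatively compact in $\Omega$. I then choose $\chi\in C^\infty_c(\Omega)$ with $\chi\equiv 1$ on a neighborhood of $\overline{K}$, so that $F_0:=(1-\chi)f$ (extended by $0$ across $K$) is a global smooth function on $\Omega$ equal to $f$ near $\partial\Omega$, and $g:=\bar\partial F_0=-f\,\bar\partial\chi$ is a smooth $\bar\partial$-closed $(0,1)$-form supported in $\textup{supp}(\bar\partial\chi)\subset V$. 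If I can solve $\bar\partial u=g$ with a solution $u$ vanishing near $\partial\Omega$, then $F:=F_0-u$ is holomorphic on $\Omega$ and equals $f$ near $\partial\Omega$; because $V$ is connected, the identity theorem forces $F=f$ on all of $V$, which is the desired extension. The boundary vanishing of $u$ will be arranged by solving in a weighted space $L^2(\Omega,e^{-\phi})$ whose weight $\phi$, built from $\varphi$ (for instance via $-\log(-\varphi)$), blows up as $z\to\partial\Omega$, so that the necessarily holomorphic restriction of any finite-norm solution vanishes there.

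Second, instead of estimating the $(0,1)$-equation directly I will invoke the $L^2$ version of Serre duality, exactly in the spirit of \cite{Ohsawa, Ohsawa1}, to identify the solvability of $\bar\partial u=g$ with an a priori estimate for $\bar\partial$ and its formal adjoint acting on $(n,n-1)$- and $(n,n)$-forms carrying the conjugate weight. Concretely, the closed-range property of $\bar\partial$ at the $(n,n-1)$ spot of the weighted Dolbeault complex produces the solution together with the norm bound that yields the boundary vanishing. The advantage of passing to bidegree $(n,\bullet)$ is that in the Bochner--Kodaira--Nakano identity the curvature term for an $(n,q)$-form is governed by sums of $q$ eigenvalues of the complex Hessian of the weight, which makes the positivity bookkeeping transparent: for an $(n,n-1)$-form the relevant quantity is the full trace minus a single eigenvalue, i.e. one needs $n-1$ positive directions.

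The analytic core is the Donnelly--Fefferman type estimate on $(n,n-1)$- and $(n,n)$-forms. I will introduce an auxiliary weight $\psi$ with the self-bounded gradient property $i\partial\bar\partial\psi\gtrsim i\partial\psi\wedge\bar\partial\psi$ (again manufactured from $\varphi$, noting that $i\partial\bar\partial(-\log(-\varphi))=\tfrac{i\partial\bar\partial\varphi}{-\varphi}+\tfrac{i\partial\varphi\wedge\bar\partial\varphi}{\varphi^2}$ supplies a rank-one positive term), and run the Bochner--Kodaira--Nakano inequality for the combined weight. The key arithmetic is that the rank-one positive contribution of the self-bounded-gradient weight combines with the $n-2$ positive directions available from $i\partial\bar\partial\varphi$ on $\textup{Supp}(i\partial\bar\partial\varphi)^{n-2}$ to furnish the $n-1$ positive directions that $(n,n-1)$-forms require. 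This is exactly what replaces Tiba's threshold $n-3$, forced by his use of $(0,2)$-forms, by $n-2$. I expect this to be the main obstacle: the boundary-vanishing weight $\phi$, the self-bounded-gradient weight $\psi$, and the curvature of $\varphi$ must be balanced simultaneously so that (i) the primal solution vanishes at the boundary, (ii) the Donnelly--Fefferman estimate holds with a universal constant despite the degeneration of $i\partial\bar\partial\varphi$ off $V$, and (iii) the weighted norm of the data $g$ measured against the curvature is finite, which is precisely the step where the rank $\ge n-2$ of $i\partial\bar\partial\varphi$ on the support set is consumed.

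Finally, I will make this rigorous by first establishing everything on the relatively compact sublevel sets $\Omega_\epsilon=\{\varphi<-\epsilon\}$, where the weighted $L^2$ theory, the density of smooth forms in the graph norms, and the hypotheses behind Serre duality and the Bochner--Kodaira--Nakano identity are unproblematic, obtaining solutions $u_\epsilon$ with bounds uniform in $\epsilon$. A weak limit as $\epsilon\to 0$ then produces the solution $u$ on $\Omega$ with the required boundary vanishing, and the cutoff argument of the first paragraph completes the extension.
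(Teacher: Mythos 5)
Your overall architecture (cutoff, weighted $\bar\partial$-equation with a weight built from $\phi=-\log(-\varphi)$, Donnelly--Fefferman type estimates on $(n,n-1)$- and $(n,n)$-forms, $L^2$ Serre duality to return to bidegree $(0,1)$, exhaustion by sublevel sets) is the same as the paper's, but there is a genuine gap at your very first step. You assert that $V$ contains a neighborhood of $\partial\Omega$, hence that $K=\Omega\setminus V$ is relatively compact and that the data $g=-f\,\bar\partial\chi$ is compactly supported in $\Omega$. This does not follow from the hypotheses and is false in general: $\textup{Supp}(i\partial\bar\partial\varphi)^{n-2}$ is a closed subset of $\Omega$ that may omit entire portions of the boundary (for instance where $\varphi$ is pluriharmonic, or where the Levi form has rank at most $n-3$), and a neighborhood of it \emph{in $\Omega$} need not contain any collar of $\partial\Omega$. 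The inclusion of the Shilov boundary of $\Omega_\epsilon$ in $\textup{Supp}(i\partial\bar\partial\varphi)^{n-1}$ gives no lower bound on how much of $\partial\Omega$ that support reaches. If $\Omega\setminus V$ really were relatively compact, the statement would reduce to the classical Hartogs theorem; the content of the theorem is precisely the case where it is not.

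This misstep propagates. Once $g$ is compactly supported, its weighted $L^2$ norm is finite for any weight, so your step (iii) never actually consumes the hypothesis on $\textup{Supp}(i\partial\bar\partial\varphi)^{n-2}$; indeed your sketch has no place where that hypothesis does real work. In the paper the cutoff $\chi$ equals $1$ on $\textup{Supp}(i\partial\bar\partial\varphi)^{n-2}$ and is supported in $W\subset V$ only in the transverse sense ($\partial W\cap\Omega$ at positive distance from the support), so $\bar\partial\chi$ lives in a region that may reach $\partial\Omega$ and on which $(i\partial\bar\partial\varphi)^{n-2}=0$; the finiteness of the norm of $f\bar\partial\chi$ in $L^2_{0,1}(\Omega,e^{\psi-\delta\phi},\omega)$ is then exactly the integrability lemma (Lemma 4.2): $e^{-\delta\phi}dV_\omega$ is integrable near such boundary points when $\delta>n-2$, which together with the Serre-duality constraint $\delta<n-1$ gives the window $n-2<\delta<n-1$. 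That is where the exponent $n-2$ enters; your description of the rank condition as supplying ``positive directions'' for the curvature term is also off --- the positivity in the Donnelly--Fefferman step comes entirely from the metric $\omega=i\partial\bar\partial(\tfrac{1}{2n}\psi+\phi)$ itself, uniformly on $\Omega$. A smaller but related point: you cannot expect the solution $u$ to vanish near all of $\partial\Omega$; the weight $e^{\psi-\delta\phi}=e^{\psi}(-\varphi)^{\delta}$ tends to zero at the boundary, and the forcing of $u=0$ comes from the blow-up of $dV_\omega$, which is available only near strictly pseudoconvex boundary points (Lemmas 4.3 and 4.4). Vanishing near one such point, plus connectedness of $V$, is what the paper uses, and is all one can get.
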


If the boundary is smooth, then Lemma 4.3 and the proof of Theorem 1.2 show the following.

\begin{corollary}
Let $\Omega$ be a smoothly bounded domain in $\mathbb{C}^n, n \geq 3$, with a smooth plurisubharmonic defining function $\varphi$ on a neighborhood of $\overline\Omega$. Let $S$ be the closure of the subset $\{ z \in \partial \Omega : ~\text{\rm the Levi form of} ~\varphi~ \text{\rm at}~$z$  \text{ \rm is of rank at least}~ n-2 \}$ in $\partial \Omega$. Then for any connected open neighborhood $V$ of $S$ in $\overline\Omega$, any holomorphic function on $V \cap \Omega$ which is continuous on $\overline{V \cap \Omega}$ can be extended to $\Omega$.
\end{corollary}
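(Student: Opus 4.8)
The plan is to derive the corollary from Theorem 1.2 by converting the hypothesis on the boundary Levi form into the interior support hypothesis of that theorem, and then running the $\bar\partial$-argument of Theorem 1.2 up to the smooth boundary. The conceptual bridge is elementary: since $\varphi$ is plurisubharmonic its complex Hessian $H_\varphi$ is positive semidefinite, and restricting a positive semidefinite Hermitian form to the complex tangent space $T^{1,0}_p(\partial\Omega)$, a subspace of complex codimension one, lowers its rank by at most one. Thus at a boundary point the Levi form has rank at least $n-2$ precisely when $H_\varphi$ has rank at least $n-2$ with its extra positive direction tangential, which gives the inclusion $S\subseteq\overline{\textup{Supp}(i\partial\bar\partial\varphi)^{n-2}}\cap\partial\Omega$. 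The two sets need not coincide, however: a positive normal eigenvalue of $H_\varphi$ can raise its rank to $n-2$ at boundary points where the Levi form has rank only $n-3$, and the lower semicontinuity of the rank also allows high-rank interior points to cluster at such boundary points.

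First I would exploit the fact that the Levi form, and hence the set $S$, is intrinsic to $\partial\Omega$ up to a positive conformal factor and is therefore independent of the choice of plurisubharmonic defining function. This frees me to replace $\varphi$ by a more convenient negative plurisubharmonic exhaustion $\psi$ of $\Omega$, smooth up to the boundary and vanishing there, and to use Lemma 4.3 to arrange that $\textup{Supp}(i\partial\bar\partial\psi)^{n-2}$ is contained in an arbitrarily small neighborhood of $S$. Writing $\psi=\chi(\varphi)$ for a convex increasing $\chi$ with $\chi(0)=0$ and, where necessary, modifying $\varphi$ within its conformal class by a positive factor $h$, one kills the spurious normal and mixed Hessian terms at boundary points with Levi rank at most $n-3$; since a positive semidefinite block matrix with a vanishing diagonal entry has the corresponding off-diagonal entries vanishing as well, this forces the Hessian rank down to the tangential rank there, so that $(i\partial\bar\partial\psi)^{n-2}$ vanishes off a neighborhood of $S$. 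This localization of the support is exactly the content of Lemma 4.3.

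With such a $\psi$ in hand, I would exhaust $\Omega$ by the sublevel sets $\Omega_\epsilon=\{\psi<-\epsilon\}$ and apply Theorem 1.2 on each connected component of $\Omega_\epsilon$ meeting $V$, using $\psi+\epsilon$ as exhaustion function; its complex Hessian equals that of $\psi$, so for small $\epsilon$ the open set $V\cap\Omega_\epsilon$ is a connected neighborhood of $\textup{Supp}(i\partial\bar\partial\psi)^{n-2}\cap\Omega_\epsilon$ in $\Omega_\epsilon$. Theorem 1.2 then yields a holomorphic extension $F_\epsilon$ of $f$ on $\Omega_\epsilon$. For $\epsilon'<\epsilon$ the restriction of $F_{\epsilon'}$ to $\Omega_\epsilon$ and $F_\epsilon$ both extend $f$ and hence agree on $V\cap\Omega_\epsilon$; by the identity theorem they coincide on $\Omega_\epsilon$, so the $F_\epsilon$ patch together to a single holomorphic function $F$ on $\Omega=\bigcup_\epsilon\Omega_\epsilon$. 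This $F$ restricts to $f$ on $V\cap\Omega$ and is the desired extension; the hypothesis that $f$ is continuous on $\overline{V\cap\Omega}$ is what guarantees that the neighborhoods $V\cap\Omega_\epsilon$ remain compatible as $\epsilon\to0$ and that $f$ carries the boundary regularity required for the $L^2$ argument near the part of $S$ lying in $\partial\Omega$.

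The main obstacle is the construction underlying Lemma 4.3, namely confining $\textup{Supp}(i\partial\bar\partial\psi)^{n-2}$ to a prescribed neighborhood of $S$. The delicate point is that the rank of $H_\psi$ is only lower semicontinuous, so one must prevent high-rank interior points from accumulating at boundary points where the Levi form degenerates, all while keeping $\psi$ plurisubharmonic and smooth up to $\partial\Omega$. Once this localization is achieved, the remainder is a routine application of the $L^2$ machinery already assembled for Theorem 1.2 --- Serre duality together with the Donnelly-Fefferman type estimates on $(n,n-1)$- and $(n,n)$-forms --- now carried out on the subdomains $\Omega_\epsilon$.
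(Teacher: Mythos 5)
Your reduction of the corollary to Theorem 1.2 hinges on a construction that is neither what the paper does nor, as far as I can see, feasible: you propose to replace $\varphi$ by a new plurisubharmonic defining function whose Monge--Amp\`ere support $\textup{Supp}(i\partial\bar\partial\,\cdot\,)^{n-2}$ is confined to a prescribed neighborhood of $S$, and you assert that this localization ``is exactly the content of Lemma 4.3.'' It is not. Lemma 4.3 is an integrability statement: if $(i\partial\bar\partial\varphi)^{k}\wedge\partial\varphi\wedge\bar\partial\varphi=0$ near a boundary point $p$ (i.e.\ the Levi form has rank less than $k$ there), then $e^{-\delta\phi}dV_{\omega}$ is integrable near $p$ for $\delta>k$; it says nothing about modifying the defining function or relocating the support of $(i\partial\bar\partial\varphi)^{n-2}$. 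Moreover the localization you want cannot be produced by the operations you list. A convex reparametrization contributes $\chi''(\varphi)\, i\partial\varphi\wedge\bar\partial\varphi$, which can only raise the rank of the complex Hessian; a conformal factor $h$ rescales the Levi form on the complex tangent space at the boundary but at interior points adds $i\partial h\wedge\bar\partial\varphi+i\partial\varphi\wedge\bar\partial h+\varphi\, i\partial\bar\partial h$, over which you have no rank control. The rank of $i\partial\bar\partial\varphi$ at interior points is simply not determined by the boundary Levi form: $\varphi$ may be strictly plurisubharmonic throughout $\Omega$ while its Levi form degenerates on a large portion of $\partial\Omega$, in which case $\textup{Supp}(i\partial\bar\partial\varphi)^{n-2}=\Omega$ for your modified functions as well, and Theorem 1.2 cannot be invoked with $V$ only a neighborhood of $S$.

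The intended argument does not pass through the support hypothesis of Theorem 1.2 at all; it reruns the proof of Theorem 1.2 with one substitution. Take the cutoff $\chi$ equal to $1$ near $S$ and supported in $V$, and fix $n-2<\delta<n-1$. The only place the hypothesis ``$V$ contains $\textup{Supp}(i\partial\bar\partial\varphi)^{n-2}$'' entered that proof was to guarantee, via Lemma 4.2, that $e^{-\delta\phi}dV_{\omega}$ is integrable where $\bar\partial\chi\neq0$ near $\partial\Omega$. Here the set $\{\bar\partial\chi\neq0\}$ approaches $\partial\Omega$ only at points where the Levi form has rank less than $n-2$, and Lemma 4.3 supplies exactly the same integrability there; the continuity of $f$ on $\overline{V\cap\Omega}$ gives the boundedness of $|f|$ needed to conclude $\bar\partial(\chi f)\in L^2_{0,1}(\Omega,e^{\psi-\delta\phi},\omega)$. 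Proposition 4.1, which requires no support hypothesis, then solves $\bar\partial u=\bar\partial(\chi f)$ with $u\in L^{2}(\Omega,e^{\psi-\delta\phi},\omega)$, and the vanishing of $u$ near a strictly pseudoconvex boundary point --- which automatically lies in $S$, hence in $V$ --- via Lemma 4.4 completes the proof exactly as before. Your exhaustion by $\Omega_{\epsilon}$ is unnecessary here since $\partial\Omega$ is already smooth, and your (correct) inclusion of $S$ in the closure of $\textup{Supp}(i\partial\bar\partial\varphi)^{n-2}$ points in the unhelpful direction; the whole difficulty is that the reverse containment fails.
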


By using a convergence sequence of smooth plurisubharmonic functions to $\varphi$, we also obtain the following corollary which is the improvement of Corollary 1 of \cite{Tiba}.

\begin{corollary}
Let $\Omega$ be a bounded domain in $\mathbb{C}^n$, $n \geq 3$. Suppose that $\varphi \in C^{0}(\Omega)$ is a negative plurisubharmonic function which satisfies $\varphi(z) \rightarrow 0$ as $z \rightarrow \partial \Omega$. Let $V$ be an open connected neighborhood of $\textup{Supp}(i \partial \bar \partial \varphi)$ in $\Omega$. Then any holomorphic function on $V$ can be extended to $\Omega$.
\end{corollary}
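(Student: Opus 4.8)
The plan is to deduce the corollary from Theorem 1.2 by a mollification argument, the point being that both the extra regularity ($C^\infty$ versus $C^0$) and the stronger hypothesis on the support ($\textup{Supp}(i\partial\bar\partial\varphi)^{n-2}$ versus $\textup{Supp}(i\partial\bar\partial\varphi)$) can be removed by approximating $\varphi$ by smooth plurisubharmonic functions and exhausting $\Omega$ by relatively compact sublevel sets. First observe that, since $\textup{Supp}(i\partial\bar\partial u)^{n-2}\subseteq\textup{Supp}(i\partial\bar\partial u)$ for any smooth plurisubharmonic $u$ (when $n\geq 3$, rank $\geq n-2$ forces rank $\geq 1$), the hypothesis on $V$ here is, for a \emph{smooth} $\varphi$, stronger than the one needed in Theorem 1.2; so the entire content lies in the passage from $C^0$ to $C^\infty$.

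Fix a holomorphic function $f$ on $V$, write $E=\textup{Supp}(i\partial\bar\partial\varphi)$, and let $\rho_\varepsilon$ be a nonnegative radial mollifier supported in $B(0,\varepsilon)$. On $\Omega_\varepsilon'=\{z\in\Omega:\textup{dist}(z,\partial\Omega)>\varepsilon\}$ set $\varphi_\varepsilon=\varphi*\rho_\varepsilon$. Then $\varphi_\varepsilon$ is smooth and plurisubharmonic, satisfies $\varphi\leq\varphi_\varepsilon<0$, and decreases to $\varphi$ as $\varepsilon\searrow0$. The crucial observation is that convolution does not enlarge the support beyond an $\varepsilon$-collar: since $i\partial\bar\partial\varphi_\varepsilon=(i\partial\bar\partial\varphi)*\rho_\varepsilon$, we have
\[
\textup{Supp}(i\partial\bar\partial\varphi_\varepsilon)^{n-2}\subseteq\textup{Supp}(i\partial\bar\partial\varphi_\varepsilon)\subseteq E+\overline{B(0,\varepsilon)}.
\]

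For $\delta>0$ the sublevel set $D_\delta=\{\varphi<-\delta\}$ is relatively compact in $\Omega$, so for $\varepsilon$ small $\varphi_\varepsilon$ is smooth on a neighborhood of $\overline{D_\delta}$. I would then work on the perturbed domain $D=\{\varphi_\varepsilon<-\delta\}$: because $\varphi_\varepsilon\geq\varphi$ one has $D\subseteq D_\delta\subset\subset\Omega$, and $\varphi_\varepsilon+\delta$ is a smooth negative plurisubharmonic exhaustion function of $D$. Using the displayed estimate together with the continuity of $\varphi$ and the compactness of $E\cap\overline{D_\delta}$, one checks that for $\varepsilon$ small enough (depending on $\delta$) the compact set $\textup{Supp}(i\partial\bar\partial\varphi_\varepsilon)^{n-2}\cap\overline{D}$ is contained in $V$; connecting its finitely many components by thin tubes inside the connected open set $V$ then produces a connected open neighborhood $W\subseteq V$ of it in $D$. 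Applying Theorem 1.2 to $(D,\varphi_\varepsilon+\delta)$ and to $f|_W$ yields a holomorphic extension $F_{\varepsilon,\delta}$ of $f$ to $D$.

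Finally I would let $\delta\searrow0$ and $\varepsilon\searrow0$ so that the domains $D=D_{\varepsilon,\delta}$ exhaust $\Omega$. All the $F_{\varepsilon,\delta}$ coincide with $f$ on a fixed open subset of $V$, hence agree with one another on overlaps by the identity theorem; working with the connected components containing a fixed reference point, they glue to a single $F\in\mathcal{O}(\Omega)$ with $F=f$ on $V$, the connectedness of $\Omega$ and of $V$ guaranteeing that the gluing is unambiguous. The main obstacle is precisely the bookkeeping of the previous step: one must smooth $\varphi$ while simultaneously (i) keeping the approximant a negative plurisubharmonic exhaustion of a domain relatively compact in $\Omega$ and (ii) keeping the support of its Levi form inside $V$. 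The convolution support estimate is exactly what makes (ii) work, while matching the sublevel sets $\{\varphi_\varepsilon<-\delta\}$ with $\{\varphi<-\delta\}$ as $\varepsilon\to0$ is what reconciles (i) with the final exhaustion of $\Omega$.
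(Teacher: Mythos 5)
Your overall strategy -- mollify $\varphi$, use $\textup{Supp}(i\partial\bar\partial\varphi_\varepsilon)^{n-2}\subseteq\textup{Supp}(i\partial\bar\partial\varphi_\varepsilon)\subseteq\textup{Supp}(i\partial\bar\partial\varphi)+\overline{B(0,\varepsilon)}$, apply Theorem 1.2 on sublevel domains, and glue by the identity theorem -- is exactly the approximation argument the paper has in mind (the paper itself only says ``by using a convergence sequence of smooth plurisubharmonic functions'' and supplies no details), and most of your steps are sound: the monotone convergence $\varphi_\varepsilon\searrow\varphi$, the support estimate for the convolution, the fact that $\varphi_\varepsilon+\delta$ is a negative smooth plurisubharmonic exhaustion of $D=\{\varphi_\varepsilon<-\delta\}\subset\subset\Omega$, and the exhaustion/gluing at the end.

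There is, however, a genuine gap at the one point where the argument is actually delicate: the construction of the \emph{connected} neighborhood $W$. You connect the components of $\textup{Supp}(i\partial\bar\partial\varphi_\varepsilon)^{n-2}\cap\overline{D}$ ``by thin tubes inside the connected open set $V$'' and assert that this yields a connected open neighborhood $W\subseteq V$ of that set \emph{in} $D$. But $V$ is merely a neighborhood of $\textup{Supp}(i\partial\bar\partial\varphi)$ in $\Omega$; a path in $V$ joining two components may well pass through points where $\varphi_\varepsilon\geq-\delta$, i.e.\ leave $D$. Theorem 1.2 applied to the domain $D$ requires the connected neighborhood to be a subset of $D$, so the tubes must lie in $V\cap D$, and $V\cap D$ need not be connected, nor need the relevant components of $V\cap D$ be joinable inside $D$. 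The obvious repair -- shrink $\delta$ so that the (compact) tubes fall inside the sublevel set -- is circular: enlarging $D$ enlarges $\textup{Supp}(i\partial\bar\partial\varphi_\varepsilon)^{n-2}\cap\overline{D}$, which then requires new tubes reaching closer to $\partial\Omega$, and so on. This matters not only for invoking Theorem 1.2 but also for your gluing step, since without it you cannot guarantee that every $F_{\varepsilon,\delta}$ agrees with $f$ on a \emph{fixed} open set (the proof of Theorem 1.2 only forces the extension to equal $f$ on the component of the neighborhood reached from the strictly pseudoconvex part of $\partial D$). The connectivity bookkeeping is the one nontrivial ingredient of the reduction, and as written your proposal asserts rather than proves it; it needs an actual argument (for instance, a more careful choice of the approximating domains, or an inductive extension that enlarges the set on which $f$ is already known to be holomorphic before passing to the next sublevel set).
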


\section{Preliminaries}

In this section, we review $L^2$ estimates of $\bar\partial$-operators and introduce some notations which are used in this paper. Let $\Omega \subset \mathbb{C}^n$ be a domain, and let $\omega$ be a K\"{a}hler metric on $\Omega$. We denote by $| \cdot |_{\omega}$ the norm of $(p,q)$-forms induced by $\omega$ and by $dV_{\omega}$ the associated volume form of $\omega$. Then, we denote by $L^2_{p,q}(\Omega, e^{-\varphi}, \omega)$ the Hilbert space of measurable $(p,q)$-forms $u$ which satisfy
$$
||u ||^2_{\omega, \varphi } = \int_{\Omega} |u|^2_{\omega} e^{-\varphi} dV_{\omega} < \infty.
$$

Let $\bar \partial : L^2_{p,q}(\Omega, e^{-\varphi},\omega) \rightarrow L^2_{p,q+1} (\Omega, e^{-\varphi}, \omega)$ be the closed densely defined linear operator, and $\bar \partial^{*}_{\varphi}$ be the Hilbert space adjoint of the $\bar \partial$-operator.
We denote the $L^2$-Dolbeault cohomology group as $H^2_{p,q}(\Omega,e^{-\varphi}, \omega)$ and the space of Harmonic forms as
\begin{equation*}
\mathcal{H}_{p,q}^{2} (\Omega,e^{-\varphi},\omega) = L^2_{p,q} (\Omega, e^{-\varphi}, \omega) \cap \text{Ker} \bar \partial \cap \text{Ker} \bar \partial^{*}_{\varphi}.
\end{equation*}
It is known that, if the image of the $\bar\partial$-operator is closed, then
these two spaces are isomorphic:
\begin{equation*}
H^2_{p,q}(\Omega,e^{-\varphi}, \omega)\cong
\mathcal{H}_{p,q}^{2} (\Omega,e^{-\varphi},\omega).
\end{equation*}


Let $\lambda_1 \leq \cdots \leq \lambda_n$ be the eigenvalues of $i \partial \bar \partial \varphi$ with respect to $\omega$ then we have
\begin{equation*}\label{eq2}
\langle [i \partial \bar \partial \varphi, \Lambda_{\omega}]u,u \rangle_{\omega} ~ \geq ~  (\lambda_1 + \cdots + \lambda_{q} - \lambda_{p+1} \cdots -\lambda_{n}) \langle u,u \rangle_{\omega}
\end{equation*}
for any smooth $(p,q)$-form. Here, $\Lambda_\omega$ is the adjoint of left multiplication by $\omega$.

Suppose that $A_{\omega,\varphi}=[i \partial \bar \partial \varphi, \Lambda_{\omega}]$ is positive definite and $\omega$ is a K\"{a}hler metric.
By \cite{Demailly}, if $\Omega$ is a pseudoconvex domain, then for any $\bar \partial$-closed form $f \in L^2_{n,q}(\Omega,e^{-\varphi}, \omega)$, there exists a $u \in L^2_{n,q-1}(\Omega,e^{-\varphi}, \omega)$ such that $\bar \partial u = f$ and
\begin{equation}\label{eq3}
\int_{\Omega} |u|_{\omega}^2 e^{-\varphi} dV_{\omega} \leq \int_{\Omega} \langle A^{-1}_{\omega,\varphi} f, f \rangle_{\omega} e^{-\varphi} dV_{\omega}.
\end{equation}

\section{Donnelly-Fefferman type estimates for $(n,q)$-forms}

Let $\Omega \subset \mathbb{C}^n$ be a bounded domain with a negative plurisubharmonic function $\varphi \in C^{\infty}(\Omega)$ such that $\varphi \rightarrow 0$ as $z \rightarrow \partial \Omega$. Consider a smooth
strictly plurisubharmonic function $\psi$ on $\overline{\Omega}$. Since $\phi = -\log(-\varphi)$ is a plurisubharmonic exhaustion function on
$\Omega$ and $|\bar\partial \phi|^2_{i \partial \bar \partial \phi} \leq 1$, $\omega = i\partial \bar \partial(\frac{1}{2n} \psi + \phi)$ is a complete K\"{a}hler metric on $\Omega$.
Let $A_{\omega,\delta}$ be $[i\partial \bar \partial (\psi + \delta \phi) , \Lambda_{\omega}]$ if $\delta \geq 0$.

\begin{lemma}\label{Lem1}
Suppose that $0<\delta<q, ~1 \leq q \leq n$. Then for any $\bar \partial$-closed form $f \in L^2_{n,q}(\Omega, e^{-\psi +\delta \phi}, \omega)$,
there exists a solution $u \in L^2_{n,q-1} (\Omega, e^{-\psi +\delta\phi}, \omega)$ such that $\bar \partial u = f $  and
\begin{equation}\label{L1E1}
\int_{\Omega} |u|^2_{\omega} e^{-\psi + \delta \phi} dV_{\omega} \leq C_{q, \delta} \int_{\Omega} \langle A^{-1}_{\omega, \delta} f, f \rangle_{\omega} e^{-\psi + \delta \phi} dV_{\omega}
\end{equation}
where $C_{q,\delta}$ is a constant which depends on $q,\delta$.
\end{lemma}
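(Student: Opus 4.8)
The plan is to prove \eqref{L1E1} by duality, reducing the solvability statement to an a priori ``twisted'' Bochner--Kodaira estimate, and then to establish that estimate by a Donnelly--Fefferman type twisting that exploits the self-bounded gradient property $|\bar\partial\phi|^2_{i\partial\bar\partial\phi}\le 1$. First I would fix the functional-analytic framework: since $\phi=-\log(-\varphi)$ is a plurisubharmonic exhaustion, $\Omega$ is pseudoconvex, and since $\omega$ is a \emph{complete} K\"ahler metric, smooth compactly supported forms are dense in the graph norms of $\bar\partial$ and $\bar\partial^*_{\Phi}$ and the Bochner--Kodaira--Nakano identity holds with no boundary term. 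Throughout write $\Phi=\psi-\delta\phi$ for the weight exponent, so that the target measure is $e^{-\Phi}dV_\omega=e^{-\psi+\delta\phi}dV_\omega$, and note that $A_{\omega,\delta}=[i\partial\bar\partial(\psi+\delta\phi),\Lambda_\omega]$ is positive definite on $(n,q)$-forms for $q\ge1$ because $\psi+\delta\phi$ is strictly plurisubharmonic.

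For the reduction, by the Andreotti--Vesentini/H\"ormander solvability criterion, to produce $u$ with $\bar\partial u=f$ and the desired bound on $\|u\|^2_{\omega,\Phi}$ it suffices to control $|\langle f,\alpha\rangle_{\Phi}|$ for every $\bar\partial$-closed $\alpha\in\mathrm{Dom}(\bar\partial^*_{\Phi})$. Applying Cauchy--Schwarz with respect to the positive operator $A_{\omega,\delta}$ gives $|\langle f,\alpha\rangle_{\Phi}|^2\le\big(\int\langle A^{-1}_{\omega,\delta}f,f\rangle e^{-\Phi}dV_\omega\big)\big(\int\langle A_{\omega,\delta}\alpha,\alpha\rangle e^{-\Phi}dV_\omega\big)$, so the whole lemma is reduced to the a priori estimate
\[
\int_\Omega\langle A_{\omega,\delta}\alpha,\alpha\rangle_\omega\,e^{-\Phi}\,dV_\omega\ \le\ C_{q,\delta}\,\|\bar\partial^*_{\Phi}\alpha\|^2_{\omega,\Phi}
\]
for all $\bar\partial$-closed $\alpha\in\mathrm{Dom}(\bar\partial^*_{\Phi})$.

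The main estimate, and the main obstacle, is this last inequality. The Bochner--Kodaira--Nakano identity for the weight $\Phi$ gives $\|\bar\partial^*_{\Phi}\alpha\|^2_{\omega,\Phi}\ge\int\langle[i\partial\bar\partial\Phi,\Lambda_\omega]\alpha,\alpha\rangle e^{-\Phi}dV_\omega$ on $\ker\bar\partial$. Since $[i\partial\bar\partial\Phi,\Lambda_\omega]=A_{\omega,\delta}-2\delta[i\partial\bar\partial\phi,\Lambda_\omega]$, this produces the wanted term $\int\langle A_{\omega,\delta}\alpha,\alpha\rangle$ together with a ``deficit'' $2\delta\int\langle[i\partial\bar\partial\phi,\Lambda_\omega]\alpha,\alpha\rangle e^{-\Phi}dV_\omega$ of the wrong sign. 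A naive curvature comparison only yields $[i\partial\bar\partial\phi,\Lambda_\omega]\le\delta^{-1}A_{\omega,\delta}$, hence a useless coefficient $2$, so the improvement must come from twisting. I would therefore run the \emph{twisted} Bochner--Kodaira inequality with a bounded twist $\tau=\tau(\phi)$ and an auxiliary positive function, completing the square in the cross term so as to generate a contribution $-\,(\text{const})\,i\partial\phi\wedge\bar\partial\phi$, and then invoke the pointwise inequality $i\partial\phi\wedge\bar\partial\phi\le i\partial\bar\partial\phi$ --- which is exactly the matrix reformulation of $|\bar\partial\phi|^2_{i\partial\bar\partial\phi}\le1$ --- to convert the gradient term back into curvature.

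The quantitative point, and where the hypothesis $0<\delta<q$ enters, is that on $(n,q)$-forms the rank-one operator $[i\partial\phi\wedge\bar\partial\phi,\Lambda_\omega]$ equals the squared contraction $|\bar\partial\phi\,\lrcorner\,\alpha|^2_\omega$, which is in turn dominated by the full curvature $\langle[i\partial\bar\partial\phi,\Lambda_\omega]\alpha,\alpha\rangle$ with a numerical gain governed by the form degree $q$. Optimizing the twist, the effective coefficient in front of the deficit becomes a constant that is finite precisely in the range $0<\delta<q$ and blows up as $\delta\uparrow q$, which explains the shape of $C_{q,\delta}$. After absorbing the deficit into a fraction of $\int\langle A_{\omega,\delta}\alpha,\alpha\rangle$ plus a multiple of $\|\bar\partial^*_{\Phi}\alpha\|^2_{\omega,\Phi}$, the a priori estimate follows, and a standard density and limiting argument --- legitimate because $\omega$ is complete and $\Omega$ is pseudoconvex --- removes the smoothness and compact-support assumptions and yields the solution $u$ with the stated bound. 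I expect the delicate step to be the optimization of the twist against the $(n,q)$-refined gradient bound, since this is what sharpens the admissible range from the crude $\delta$ small to the full $\delta<q$.
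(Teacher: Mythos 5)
Your overall architecture --- reduce by duality to a twisted a priori estimate for $\bar\partial$-closed $(n,q)$-forms, then run a Donnelly--Fefferman twist exploiting $|\bar\partial\phi|^2_{i\partial\bar\partial\phi}\le 1$ --- is a recognized alternative to what the paper actually does. The paper follows Berndtsson--Charpentier on the solution side: on a pseudoconvex exhaustion $\Omega_k\subset\subset\Omega$ it takes the minimal solution $u_k$ of $\bar\partial u_k=f$ for the weight $e^{-\psi}$, observes that $u_ke^{\delta\phi}$ is still the minimal solution of $\bar\partial(u_ke^{\delta\phi})=(f+\delta\,\bar\partial\phi\wedge u_k)e^{\delta\phi}$ for the shifted weight $e^{-\psi-\delta\phi}$, applies H\"ormander's estimate with $A_{\omega,\delta}$ to that equation, and absorbs the error term by Cauchy--Schwarz before passing to a weak limit. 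Either route can in principle succeed, but your proposal is incomplete exactly at the step you yourself flag as delicate, and the mechanism you assign to it is not the correct one.

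Concretely, you claim the range $0<\delta<q$ comes from the assertion that $|\bar\partial\phi\lrcorner\alpha|^2_\omega=\langle[i\partial\phi\wedge\bar\partial\phi,\Lambda_\omega]\alpha,\alpha\rangle_\omega$ is dominated by $\langle[i\partial\bar\partial\phi,\Lambda_\omega]\alpha,\alpha\rangle_\omega$ ``with a numerical gain governed by the form degree $q$.'' No such gain exists: when $i\partial\bar\partial\phi$ is rank one, i.e.\ $i\partial\bar\partial\phi=i\partial\phi\wedge\bar\partial\phi$, the two quantities coincide for every $q$, and for $\phi=-\log(-\varphi)$ this degeneration occurs wherever $i\partial\bar\partial\varphi=0$. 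The factor $q$ actually enters through the \emph{other} curvature operator: since $\omega=i\partial\bar\partial(\frac{1}{2n}\psi+\phi)$ and $\delta<2n$, one has $i\partial\bar\partial(\psi+\delta\phi)\ge\delta\,\omega$, hence on $(n,q)$-forms $\langle A_{\omega,\delta}\alpha,\alpha\rangle_\omega\ge q\delta|\alpha|^2_\omega$ (the sum of the $q$ smallest eigenvalues), while the gradient term obeys only $|\bar\partial\phi\lrcorner\alpha|^2_\omega\le|\partial\phi|^2_\omega|\alpha|^2_\omega\le|\alpha|^2_\omega$. Combining the two bounds the deficit by a multiple $(1+t)\delta/q$ of $\int\langle A_{\omega,\delta}\alpha,\alpha\rangle_\omega e^{-\psi+\delta\phi}dV_\omega$, which is absorbable precisely when $\delta<q$ and yields the constant $C_{q,\delta}=(1+\frac{1}{t})/(1-(1+t)\frac{\delta}{q})$. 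Without identifying this comparison --- which is where the specific construction of $\omega$ is used --- optimizing the twist in the way you describe would at best give a $q$-independent range such as $\delta<1$, not $\delta<q$. You should also actually carry out the completion of the square in the twisted Bochner--Kodaira inequality and the final density/limiting argument; as written these are asserted rather than proved.
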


By Lemma 3.1, the $L^2$-Dolbeault cohomology group $H^{2}_{n,q}(\Omega, e^{-\psi + \delta\phi},\omega)$ vanishes.

\begin{corollary}\label{cor1}
Under the same condition as Lemma \ref{Lem1}, $H^{2}_{n,q}(\Omega, e^{-\psi + \delta\phi},\omega)=\{0\}$.
\end{corollary}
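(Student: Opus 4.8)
The plan is to read off the vanishing directly from the solvability provided by Lemma \ref{Lem1}, so the argument is short. Recall that the $L^2$-Dolbeault cohomology group $H^{2}_{n,q}(\Omega, e^{-\psi+\delta\phi},\omega)$ is, by definition, the quotient of the space of $\bar\partial$-closed forms in $L^2_{n,q}(\Omega, e^{-\psi+\delta\phi},\omega)$ by the image of $\bar\partial$ acting from $L^2_{n,q-1}(\Omega, e^{-\psi+\delta\phi},\omega)$. Thus it suffices to show that every $\bar\partial$-closed form at the $(n,q)$-level is $\bar\partial$-exact within the same weighted complex.

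This is precisely the content of Lemma \ref{Lem1}: the hypotheses $0<\delta<q$ and $1\leq q\leq n$ are exactly those under which the lemma applies, so for any $\bar\partial$-closed $f\in L^2_{n,q}(\Omega, e^{-\psi+\delta\phi},\omega)$ there is a $u\in L^2_{n,q-1}(\Omega, e^{-\psi+\delta\phi},\omega)$ with $\bar\partial u=f$ satisfying the estimate \eqref{L1E1}. Consequently $\text{Ker}\,\bar\partial\subseteq\text{Im}\,\bar\partial$ at the $(n,q)$-level. Since the reverse inclusion $\text{Im}\,\bar\partial\subseteq\text{Ker}\,\bar\partial$ is automatic from $\bar\partial^2=0$, the two subspaces coincide and the quotient is trivial, i.e. $H^{2}_{n,q}(\Omega, e^{-\psi+\delta\phi},\omega)=\{0\}$.

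There is essentially no obstacle here beyond bookkeeping. The one point requiring care is that the solution $u$ furnished by Lemma \ref{Lem1} lies in exactly the weighted space $L^2_{n,q-1}(\Omega, e^{-\psi+\delta\phi},\omega)$ that defines the complex, so the exactness is genuine rather than a merely local statement; the quantitative bound \eqref{L1E1} is what guarantees this. As a byproduct, the equality $\text{Im}\,\bar\partial=\text{Ker}\,\bar\partial$ shows the image of $\bar\partial$ is closed (being the kernel of a closed operator), so the isomorphism with the harmonic space recorded in Section 2 is valid in this setting and forces $\mathcal{H}^{2}_{n,q}(\Omega, e^{-\psi+\delta\phi},\omega)=\{0\}$ as well.
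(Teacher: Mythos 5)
Your proof is correct and matches the paper's reasoning: the paper gives no separate argument for Corollary \ref{cor1} beyond noting that it follows immediately from Lemma \ref{Lem1}, which is exactly the kernel-equals-image observation you spell out. The remark that exactness also forces closed range and the vanishing of the harmonic space is a correct (if unneeded here) bonus.
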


To prove Lemma 3.1, we use the idea of Berndtsson--Charpentier's proof of the Donnelly-Fefferman type estimate in \cite{Berndtsson}.

\begin{proof}
Since $\Omega$ can be exhausted by pseudoconvex domains $\Omega_k \subset \subset \Omega$, for any $\bar\partial$--closed form $f\in L^2_{n,q} (\Omega, e^{-\psi}, \omega)$, the minimal solution $u_k \in L^2_{n,q-1} (\Omega_k, e^{-\psi} , \omega)$ of $\bar \partial u_k = f$ exists and it satisfies
$$
\int_{\Omega_k} |u_{k}|^2_{\omega} e^{-\psi} dV_{\omega} \leq \int_{\Omega_k} \langle A^{-1}_{\omega,0} f, f \rangle_{\omega}e^{-\psi}dV_{\omega}.
$$
Note that $A_{\omega,0}$ has its inverse $A^{-1}_{\omega,0}$ on $\Omega_k$ by the plurisubharmonicity of $\phi$.

We consider $u_{k} e^{\delta \phi}$. Since $\phi$ is bounded on $\Omega_k$, $u_k e^{\delta \phi} \in  L^2_{n,q-1} (\Omega_k, e^{-\psi - \delta \phi},\omega)$ and it is orthogonal to $N_{n,q-1}$ where $N_{n,q-1}$ is the kernel of
$$
\bar\partial : L^2_{n,q-1} (\Omega_k, e^{-\psi - \delta\phi}, \omega) \rightarrow L^2_{n,q} (\Omega_k, e^{-\psi - \delta \phi}, \omega).
$$
By $ |\partial \phi|^2_{\omega} \leq 1$, we have $(f  + \bar \partial \phi \wedge \delta u_k)e^{\delta \phi} \in L^2_{n,q}(\Omega_k, e^{-\psi - \delta \phi}, \omega)$.
Therefore, $u_ke^{\delta \phi}$ is the minimal solution of
\begin{equation*}
\bar\partial (u_ke^{\delta\phi})=(f  + \bar \partial \phi \wedge \delta u_k )e^{\delta \phi} \in L^2_{n,q}(\Omega_k, e^{-\psi - \delta\phi}, \omega).
\end{equation*}
Thus, we have
\begin{equation*}
\int_{\Omega_k} |u_k|^2_{\omega} e^{-\psi + \delta\phi} dV_{\omega} \leq \int_{\Omega_k} \langle A^{-1}_{\omega, \delta}(f + \bar \partial \phi \wedge \delta u_k ), f + \bar \partial \phi \wedge \delta u_k \rangle_{\omega} e^{-\psi +\delta\phi}dV_{\omega}.
\end{equation*}
By the Cauchy-Schwarz inequality, for any $t>0$,
\begin{align*}
& \int_{\Omega_k} \langle A^{-1}_{\omega, \delta }(f + \bar \partial \phi \wedge \delta u_k ), f + \bar \partial \phi \wedge \delta u_k \rangle_{\omega} e^{-\psi +\delta\phi}dV_{\omega} \\
& \leq  \bigg( 1+ \frac{1}{t} \bigg) \int_{\Omega_k} \langle A^{-1}_{\omega,\delta} f, f \rangle _{\omega} e^{-\psi + \delta \phi} dV_{\omega} \\
& + (1+t) \delta^2 \int_{\Omega_k} \langle A^{-1}_{\omega,\delta} (\bar \partial \phi \wedge u_k), \bar \partial \phi \wedge u_k \rangle_{\omega} e^{-\psi + \delta \phi}dV_{\omega}.
\end{align*}
Since $\omega = i \partial \bar \partial (\frac{1}{2n} \psi + \phi)$
and $\delta <2n$,
we have $i\partial \bar \partial (\psi + \delta \phi) \geq \delta i \partial \bar \partial (\frac{1}{2n}\psi + \phi)$. Hence, $\langle A_{\omega,\delta} f,f \rangle_{\omega} \geq q \delta |f|^2_{\omega}$ if $f$ is an $(n,q)$-form.
If we take $t$ sufficiently close to 0,
then $C_{q,\delta}:=(1+\frac{1}{t})/(1-(1+t)\frac{\delta}{q})$ is positive since $\delta < q$.
Note that $C_{q,\delta}$ does not depend on $k$.
For such a $t$, we have
\begin{eqnarray}\label{L1E2}
 \int_{\Omega_k} |u_k|^2_{\omega} e^{-\psi + \delta\phi} dV_{\omega} \leq C_{q,\delta}\int_{\Omega} \langle A^{-1}_{\omega,\delta} f ,f \rangle_{\omega} e^{-\psi + \delta\phi} dV_{\omega}.
\end{eqnarray}

Note that $L^2_{n,q}(\Omega,e^{-\psi + \delta\phi},\omega)$ is a subset of $L^2_{n,q}(\Omega,e^{-\psi},\omega)$. Hence, for any $\bar \partial$-closed form $f \in L^2_{n,q} (\Omega, e^{-\psi+\delta\phi}, \omega)$,
the right-hand side of $(\ref{L1E2})$ is finite.
Since $\{\Omega_k\}$ is an exhaustion of $\Omega$ and $u_{k}$ is uniformly bounded by (\ref{L1E2})
, we obtain a weak limit $u \in L^{2,\text{loc}}_{n,q-1}(\Omega, e^{-\psi +\delta\phi}, \omega)$ of $u_{k}$, and it satisfies $\bar \partial u = f$ and, for each compact set $K$ in $\Omega$,
\begin{equation*}
 \int_{K} |u|^2_{\omega} e^{-\psi + \delta\phi} dV_{\omega} \leq C_{q,\delta}\int_{\Omega} \langle A^{-1}_{\omega,\delta} f ,f \rangle_{\omega} e^{-\psi + \delta\phi} dV_{\omega}.
\end{equation*}
Using the monotone convergence theorem for $K$, we obtain the desired result.
\end{proof}

\section{Proof of the main Theorem 1.2}
The key proposition of this section is the following:

\begin{proposition}
Under the same condition as Theorem 1.2, if $1 \leq q < n$ and $0<\delta<n-q$
then $H^{2}_{0,q} (\Omega, e^{\psi-\delta\phi}, \omega)=\{0\}$.
\end{proposition}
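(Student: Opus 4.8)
The plan is to deduce the vanishing of $H^2_{0,q}(\Omega, e^{\psi - \delta\phi}, \omega)$ from the vanishing already obtained in Corollary \ref{cor1} by passing to the conjugate bidegree via an $L^2$ version of Serre duality. First I would observe that the hypotheses match up under the substitution $q \mapsto n-q$: the condition $0<\delta<n-q$ on the $(0,q)$ side becomes exactly the condition $0 < \delta < q'$ with $q' = n-q$ required by Corollary \ref{cor1} on the $(n, n-q)$ side, and since $1 \le q < n$ we have $1 \le n-q \le n-1$, so Lemma \ref{Lem1} and Corollary \ref{cor1} apply in bidegree $(n, n-q)$ with the weight $e^{-\psi + \delta\phi}$. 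Thus $H^2_{n, n-q}(\Omega, e^{-\psi + \delta\phi}, \omega) = \{0\}$, and because Lemma \ref{Lem1} in fact solves $\bar\partial u = f$ with an a priori estimate for every $\bar\partial$-closed $f$, the image of $\bar\partial$ into $L^2_{n,n-q}$ coincides with $\ker\bar\partial$ and is in particular closed; hence the harmonic space $\mathcal{H}^2_{n,n-q}(\Omega, e^{-\psi+\delta\phi}, \omega)$ vanishes as well.

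Next I would invoke the conjugate-linear Hodge star operator attached to the complete K\"ahler metric $\omega$ and the weight. Since $\omega$ is complete, this operator gives a conjugate-linear isometric isomorphism $L^2_{0,q}(\Omega, e^{\psi - \delta\phi}, \omega) \to L^2_{n, n-q}(\Omega, e^{-\psi + \delta\phi}, \omega)$ that interchanges $\bar\partial$ and, up to sign, the adjoint $\bar\partial^*$, and therefore carries $\ker\bar\partial \cap \ker\bar\partial^*$ in bidegree $(0,q)$ isometrically onto $\ker\bar\partial \cap \ker\bar\partial^*$ in bidegree $(n, n-q)$. This is precisely the $L^2$-Serre duality used in \cite{Ohsawa, Ohsawa1}. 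Combining with the previous step, $\mathcal{H}^2_{0,q}(\Omega, e^{\psi - \delta\phi}, \omega) \cong \mathcal{H}^2_{n, n-q}(\Omega, e^{-\psi + \delta\phi}, \omega) = \{0\}$.

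Finally I would upgrade the vanishing of the harmonic space to the vanishing of the cohomology group. Because the Hodge star intertwines $\bar\partial$ with $\bar\partial^*$, closedness of the range of $\bar\partial$ on the $(n, n-q)$ side, already established, transfers to closedness of the range of $\bar\partial$ on the $(0,q)$ side; equivalently one uses that $T$ has closed range if and only if $T^*$ does. With closed range in hand, the orthogonal Hodge decomposition for the complete metric $\omega$ yields $H^2_{0,q}(\Omega, e^{\psi - \delta\phi}, \omega) \cong \mathcal{H}^2_{0,q}(\Omega, e^{\psi - \delta\phi}, \omega) = \{0\}$, as desired.

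I expect the main obstacle to be the careful bookkeeping needed to justify the Serre-duality isomorphism in the $L^2$ setting: one must verify that the weights $e^{\psi - \delta\phi}$ and $e^{-\psi + \delta\phi}$ are genuinely dual under the Hodge star, that the relevant operators have closed range so that the duality is a bona fide isomorphism of cohomology groups rather than merely a non-degenerate pairing of reduced cohomologies, and that completeness of $\omega$ legitimately supplies the density and Hodge-decomposition statements on which these arguments rely. The sign conventions in the intertwining relations between $\bar\partial$, $\bar\partial^*$ and the Hodge star are routine but must be tracked so that harmonic forms are indeed sent to harmonic forms.
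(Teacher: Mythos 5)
Your overall strategy --- deducing the vanishing from Corollary \ref{cor1} by $L^2$ Serre duality for the complete metric $\omega$ --- is exactly the paper's, but your final step contains a degree-bookkeeping gap. The closed range you actually establish is that of $\bar\partial\colon L^2_{n,n-q-1}\to L^2_{n,n-q}$ (from $H^2_{n,n-q}(\Omega,e^{-\psi+\delta\phi},\omega)=\{0\}$). Under the conjugate-linear Hodge star together with the rule ``$T$ has closed range if and only if $T^*$ does,'' this operator corresponds to $\bar\partial\colon L^2_{0,q}\to L^2_{0,q+1}$, \emph{not} to $\bar\partial\colon L^2_{0,q-1}\to L^2_{0,q}$. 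It is the latter whose closed range you need in order to identify $H^2_{0,q}$ with the harmonic space $\mathcal{H}^2_{0,q}$, and its dual partner is $\bar\partial\colon L^2_{n,n-q}\to L^2_{n,n-q+1}$, whose closed range does not follow from anything you have written. So the passage from $\mathcal{H}^2_{0,q}(\Omega,e^{\psi-\delta\phi},\omega)=\{0\}$ to $H^2_{0,q}(\Omega,e^{\psi-\delta\phi},\omega)=\{0\}$ is not yet justified.

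The missing ingredient is the vanishing of $H^2_{n,n-q+1}(\Omega,e^{-\psi+\delta\phi},\omega)$, which the paper's proof invokes explicitly alongside the vanishing in bidegree $(n,n-q)$. Since $0<\delta<n-q<n-q+1$ and $1\le n-q+1\le n$ when $1\le q<n$, Corollary \ref{cor1} applies in bidegree $(n,n-q+1)$ with the same $\delta$, so $\bar\partial$ has closed range into $L^2_{n,n-q+1}$ as well; the Chakrabarti--Shaw duality then delivers both the closed range of $\bar\partial\colon L^2_{0,q-1}(\Omega,e^{\psi-\delta\phi},\omega)\to L^2_{0,q}(\Omega,e^{\psi-\delta\phi},\omega)$ and the isomorphism $\mathcal{H}^2_{0,q}(\Omega,e^{\psi-\delta\phi},\omega)\cong\mathcal{H}^2_{n,n-q}(\Omega,e^{-\psi+\delta\phi},\omega)=\{0\}$. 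With this one extra application of Corollary \ref{cor1} your argument closes and coincides with the paper's; without it, the last step is a genuine gap.
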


\begin{proof}
Corollary \ref{cor1} implies that $H^2_{n,n-q}(\Omega, e^{-\psi+\delta\phi},\omega)$ and $H^2_{n,n-q+1}(\Omega, e^{-\psi +\delta\phi} ,\omega)$ are $\{0\}$.
Therefore, the Serre duality in \cite{Shaw} implies that
\begin{equation*}
\bar \partial:L^2_{0,q-1} (\Omega, e^{\psi - \delta \phi}, \omega) \rightarrow L^2_{0,q} (\Omega, e^{\psi - \delta \phi}, \omega)
\end{equation*}
has a closed range and
\begin{equation*}
\mathcal{H}_{n,n-q}^2 (\Omega, e^{-\psi + \delta \phi}, \omega) \cong \mathcal{H}_{0,q}^2 (\Omega, e^{\psi - \delta \phi}, \omega)=\{0\}
\end{equation*}
since $\omega$ is a complete K\"{a}hler metric on $\Omega$. Hence, $H^2_{0,q}(\Omega, e^{\psi - \delta \phi},\omega)=\{0\}$.
\end{proof}

For the convenience of readers, we repeat Lemma 5 in \cite{Tiba}.
\begin{lemma}\label{T1}
Let $\Omega$ be a smoothly bounded pseudoconvex domain in $\mathbb{C}^n$
with a plurisubharmonic defining function $\varphi\in C^\infty(\overline\Omega)$, i.e. $\Omega=\{\varphi<0\}$ and $d\varphi(z) \neq 0$ on $\partial\Omega$. Let $p\in \partial\Omega$ and let
$1 \leq k \leq n$ be an integer. Assume that $(i\partial\bar\partial\varphi)^k = 0$ in a neighborhood of $p$. If $\delta > k$,
then  $\exp(-\delta\phi)dV_{\omega}$ is integrable around $p$ in $\Omega$.
Here, $\phi = -\log(-\varphi)$ and $\omega = i\partial \bar \partial(\frac{1}{2n} \psi + \phi)$.
\end{lemma}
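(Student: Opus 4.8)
The plan is to reduce the statement to a one–dimensional integrability criterion in the direction transverse to $\partial\Omega$, by making the singularity of $dV_\omega$ along the boundary explicit. Note first that $e^{-\delta\phi}=(-\varphi)^{\delta}$ and that $dV_\omega$ is a constant multiple of $\omega^{n}$, so the integrand equals $(-\varphi)^{\delta}\omega^{n}$ up to a constant, and everything is governed by the order of the pole of $\omega^{n}$ in the variable $-\varphi$ along $\partial\Omega$. A direct computation from $\phi=-\log(-\varphi)$ gives
\[
i\partial\bar\partial\phi=\frac{i\partial\bar\partial\varphi}{-\varphi}+\frac{i\partial\varphi\wedge\bar\partial\varphi}{\varphi^{2}},
\]
so that, writing $\beta=i\partial\bar\partial\psi$, $\alpha=i\partial\bar\partial\varphi$ and $\gamma=i\partial\varphi\wedge\bar\partial\varphi$,
\[
\omega=\frac{1}{2n}\beta+\frac{\alpha}{-\varphi}+\frac{\gamma}{\varphi^{2}}.
\]
Here $\beta,\alpha,\gamma$ are smooth nonnegative $(1,1)$–forms on a neighborhood of $p$ in $\overline\Omega$, using $\varphi,\psi\in C^{\infty}(\overline\Omega)$ and the plurisubharmonicity of $\varphi$.

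Next I would expand $\omega^{n}$ and track the pole order. The two key algebraic facts are $\gamma\wedge\gamma=0$, because $\partial\varphi\wedge\partial\varphi=0$, and $\alpha^{k}=0$ on a neighborhood of $p$ by hypothesis; hence $\gamma$ occurs to at most the first power and $\alpha$ to at most the $(k-1)$st power in every nonzero term of the multinomial expansion. Separating the terms with and without a factor $\gamma$,
\[
\omega^{n}=\Big(\tfrac{1}{2n}\beta+\tfrac{\alpha}{-\varphi}\Big)^{n}+n\,\frac{\gamma}{\varphi^{2}}\wedge\Big(\tfrac{1}{2n}\beta+\tfrac{\alpha}{-\varphi}\Big)^{n-1},
\]
and in each factor $\big(\tfrac{1}{2n}\beta+\tfrac{\alpha}{-\varphi}\big)^{m}$ only the powers $\alpha^{j}$ with $j\le k-1$ survive, each contributing a pole of order $j\le k-1$. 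Thus the first group has pole order at most $k-1$, while the second, carrying the extra $\varphi^{-2}$ from $\gamma$, has pole order at most $(k-1)+2=k+1$. Since every term is a nonnegative $(n,n)$–form whose density is a bounded smooth function times $(-\varphi)^{-m_i}$ with $m_i\le k+1$, and $-\varphi<1$ near $p$, this yields $\omega^{n}\le C\,(-\varphi)^{-(k+1)}\,d\lambda$ near $p$, where $d\lambda$ is the Euclidean volume; consequently $e^{-\delta\phi}\,dV_\omega\le C'(-\varphi)^{\,\delta-k-1}\,d\lambda$ there.

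Finally, since $d\varphi\neq0$ near $p$, I would choose local real coordinates $(t,x')$ on a neighborhood $U$ of $p$ with $t=-\varphi$, so that $\Omega\cap U=\{t>0\}$ and $d\lambda=J(t,x')\,dt\,dx'$ with $J$ bounded. Integrating the transverse variable,
\[
\int_{\Omega\cap U}e^{-\delta\phi}\,dV_\omega\ \le\ C'\int_{U'}\!\!\int_{0}^{\epsilon}t^{\,\delta-k-1}\,J\,dt\,dx',
\]
which is finite precisely when $\int_{0}^{\epsilon}t^{\,\delta-k-1}\,dt<\infty$, i.e.\ when $\delta-k-1>-1$, that is $\delta>k$. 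This is exactly the hypothesis and gives the asserted integrability.

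The main obstacle is the pole–order estimate of the second step, which is the only place where the hypothesis on $\varphi$ is genuinely used: it is essential that the two nilpotency relations $\gamma\wedge\gamma=0$ and $\alpha^{k}=0$ together cap the pole of $\omega^{n}$ at order $k+1$, rather than the generic order $n+1$ that one would obtain from the term $\gamma\wedge\alpha^{n-1}$. The precise matching of the transverse exponent $\delta-k-1$ against the borderline value $-1$ is what ties the conclusion to $\delta>k$; the coordinate reduction of the last step is routine once $d\varphi\neq0$ is invoked.
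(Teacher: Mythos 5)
Your proof is correct, and it follows essentially the same route as the source: the paper itself only quotes this lemma from Tiba's paper without reproving it, but your expansion of $dV_\omega$ into terms $\beta^a\wedge\alpha^b\wedge\gamma^c$ with pole order $b+2c\le (k-1)+2=k+1$, followed by integration of $(-\varphi)^{\delta-k-1}$ in the transverse variable, is exactly the decomposition the authors use in their proof of the companion Lemma 4.3 (where the weaker hypothesis on $\partial\Omega$ forces an extra Taylor expansion along the inward normal that your situation does not need, since $(i\partial\bar\partial\varphi)^k$ vanishes on a full neighborhood).
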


The following lemma is a variant of Lemma 5 in \cite{Tiba}. It is used to prove Corollary 1.3.
\begin{lemma}\label{}
Let $\Omega$ be a smoothly bounded pseudoconvex domain in $\mathbb{C}^n$
with a smooth plurisubharmonic defining function $\varphi$. Let $p\in \partial\Omega$ and let
$1 \leq k \leq n-1$ be an integer.
Assume that
\begin{equation}\label{T1E1}
(i \partial \bar \partial \varphi)^{k} \wedge \partial \varphi \wedge \bar \partial \varphi =0
\end{equation}
in a neighborhood of $p$ in $\partial\Omega$, i.e. the Levi form of $\varphi$ is of rank less than k.
If $\delta > k$, then $\exp(-\delta\phi)dV_{\omega}$ is integrable around $p$ in $\Omega$.
Here, $\phi = -\log(-\varphi)$ and $\omega = i\partial \bar \partial(\frac{1}{2n} \psi + \phi)$.
\end{lemma}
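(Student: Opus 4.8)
The plan is to reduce the statement to a pointwise bound on the density of $\omega^n$ near $p$ and then integrate. First I would record the identity $i\partial\bar\partial\phi = \frac{1}{-\varphi}\,i\partial\bar\partial\varphi + \frac{1}{\varphi^2}\,i\partial\varphi\wedge\bar\partial\varphi$, so that, writing $\Theta = \frac{1}{2n}i\partial\bar\partial\psi$, $L = i\partial\bar\partial\varphi \geq 0$ and $\beta = i\partial\varphi\wedge\bar\partial\varphi \geq 0$, the K\"ahler form becomes
\[
\omega = \Theta + \frac{1}{-\varphi}L + \frac{1}{\varphi^2}\beta .
\]
Since $e^{-\delta\phi} = (-\varphi)^{\delta}$ and $dV_\omega = \omega^n/n!$, and since $-\varphi$ is comparable to the distance to $\partial\Omega$, it suffices to prove that near $p$ the density of $\omega^n$ with respect to the Euclidean volume is $O\big((-\varphi)^{-(k+1)}\big)$: then $e^{-\delta\phi}dV_\omega = O\big((-\varphi)^{\delta-k-1}\big)$, which is integrable across the boundary exactly when $\delta > k$.

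Next I would expand $\omega^n$. Because $\beta$ has rank one, $\beta\wedge\beta = 0$, so only the zeroth and first powers of $\beta$ survive, and the density of $\omega^n$ is a sum of terms of the shape $(-\varphi)^{-b}\,\Theta^{n-b}\wedge L^{b}$ and $(-\varphi)^{-b-2}\,\Theta^{n-1-b}\wedge L^{b}\wedge\beta$. Up to bounded factors coming from $\Theta$, the first has density $\lesssim (-\varphi)^{-b}\sigma_b(L)$ and the second $\lesssim (-\varphi)^{-b-2}\sigma_b(L_{\mathrm{tan}})$, where $\sigma_b$ denotes the $b$-th elementary symmetric function of the eigenvalues (equivalently the sum of the principal $b\times b$ minors), and $L_{\mathrm{tan}}$ is the tangential part of $L$ that $\beta$ sees (the factors $\partial\varphi$ and $\bar\partial\varphi$ in $\beta$ annihilate the normal components of $L$). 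Hypothesis \eqref{T1E1} says precisely that $L_{\mathrm{tan}}$, the Levi form of $\varphi$, has rank at most $k-1$ on $\partial\Omega$ near $p$; since $L\geq 0$ and the complex tangent space has codimension one, $L$ itself has rank at most $k$ there.

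The heart of the argument is to control how fast the vanishing eigenvalues grow off the boundary. I would use that the sum of the smallest $n-k$ eigenvalues of the smooth positive semidefinite matrix $L_{\mathrm{tan}}$ (respectively $L$) is a Lipschitz function on $\overline\Omega$ near $p$ that vanishes identically on $\partial\Omega$; being nonnegative and Lipschitz, it is therefore $O(\mathrm{dist}(\cdot,\partial\Omega)) = O(|\varphi|)$, so every eigenvalue that vanishes on the boundary is $O(|\varphi|)$. Consequently, for $b\geq k$ each monomial in $\sigma_b(L_{\mathrm{tan}})$ contains at least $b-(k-1)$ of these small eigenvalues, giving $\sigma_b(L_{\mathrm{tan}}) = O(|\varphi|^{\,b-k+1})$, and similarly $\sigma_b(L) = O(|\varphi|^{\,b-k})$ for the full Hessian. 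Substituting these bounds makes every term in the expansion $O\big((-\varphi)^{-(k+1)}\big)$: the $\beta$-terms with $b\geq k$ give $(-\varphi)^{-b-2}|\varphi|^{\,b-k+1} = (-\varphi)^{-(k+1)}$, those with $b\leq k-1$ are smaller, and all $\beta$-free terms are $O((-\varphi)^{-k})$. This yields the desired density bound, and integrating over a coordinate neighborhood of $p$ in $\Omega$ finishes the proof.

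I expect the Lipschitz order-of-vanishing estimate for the small eigenvalues to be the main obstacle. Unlike Lemma \ref{T1}, where $(i\partial\bar\partial\varphi)^{k}\equiv 0$ on a full neighborhood kills all high powers of $L$ identically, here \eqref{T1E1} is imposed only along $\partial\Omega$, so one must quantify the interior growth of the eigenvalues that degenerate on the boundary. The nonnegativity of $L$ (plurisubharmonicity) is exactly what forces these eigenvalues to vanish at least to first order, converting the naive bound $\sigma_b = O(|\varphi|)$ into the sharp $\sigma_b = O(|\varphi|^{\,b-k+1})$ needed to reach the threshold $\delta > k$ rather than $\delta > n-1$.
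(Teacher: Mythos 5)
Your proposal is correct and follows essentially the same route as the paper: expand $\omega^n$ into the terms $(-\varphi)^{-b}\Theta^{n-b}\wedge L^b$ and $(-\varphi)^{-b-2}\Theta^{n-1-b}\wedge L^b\wedge\beta$, use the boundary rank condition together with a first-order (Lipschitz) vanishing estimate for the degenerate part of the complex Hessian to show the density is $O\big((-\varphi)^{-(k+1)}\big)$, and integrate. The only difference is packaging: the paper obtains the first-order estimate by diagonalizing $i\partial\bar\partial\varphi$ at each boundary point $q$ and noting $a_{ij}(q-t\vec\nu)=O(t)$ for the entries vanishing at $q$, whereas you phrase it via the Lipschitz continuity of the sum of the smallest $n-k$ eigenvalues; these are the same estimate.
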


\begin{proof}
Denote by $\vec{\nu}$ the unit outward normal vector at $p$.
For a point $q\in \partial \Omega$ near $p$, (\ref{T1E1}) implies that the Levi form of $\varphi$ at $q$ has at least $n-k$ zero eigenvalues. Now consider a holomorphic coordinate system $(z_1, \cdots, z_n)$ such that $i\partial \bar \partial \varphi = \sum_{ij} a_{ij} dz_i  \wedge d \bar{z_j}$ and $i\partial \bar \partial \varphi|_{q} = \sum_{i} a_{ii}(q) dz_i  \wedge d \bar{z_i}$, where $a_{ij}$ is a smooth function on $\overline{\Omega}$ and $a_{ii}(q)=0$ when $1 \leq i \leq n-k$.

By smoothness of $a_{ij}$, it follows that $a_{ij}(q-t \vec{\nu}) = O(t)$ if $i \neq j$ or ~$1 \leq i=j \leq n-k$. Hence,
\begin{equation*}
(i\partial \bar \partial \varphi)^{k+l} \wedge \partial \varphi \wedge \overline{\partial} \varphi \wedge (i \partial \bar \partial \psi)^{n-k-l-1} = O(t^{l+1})(i \partial \bar \partial \psi)^{n}
\end{equation*}
and
\begin{equation*}
(i \partial \bar \partial \varphi)^{k+l} \wedge (i \partial \bar \partial \psi)^{n-k-l} = O(t^{l})(i \partial \bar \partial \psi)^{n}
\end{equation*}
on the real half line $q-t \vec{\nu}$, $t>0$.
Since $\exp(-\delta \phi) dV_{\omega}$ is approximately
\begin{equation*}
(-\varphi)^{\delta} \left(\frac{(i\partial \bar \partial \varphi)^{n-1} \wedge \partial \varphi \wedge \overline{\partial} \varphi}{(-\varphi)^{n+1}}  +
\frac{(i \partial \bar \partial \varphi)^n + (i\partial \bar \partial \varphi)^{n-2} \wedge \partial \varphi \wedge \bar \partial \varphi \wedge (i\partial \bar \partial \psi)^{1}}{(-\varphi)^{n}}  \right),
\end{equation*}
$\exp(-\delta \phi) dV_{\omega}$ is integrable near $p$ by the Fubini theorem.
\end{proof}

We also need the following lemma. It is similar to the Lemma 5.1 of \cite{Ohsawa}.
\begin{lemma}\label{L2}
Let $\Omega$ be a smoothly bounded domain in $\mathbb{C}^n$ with a defining function $\varphi\in C^{\infty}(\overline{\Omega})$.
Let $U$ be an open set in $\mathbb{C}^n$ such that $ \partial \Omega \cap U  \not = \emptyset$ and $\Omega \cap U$ is connected. If $u$ is a holomorphic function on $\Omega \cap U$ such that
\begin{equation}\label{L2E1}
\int_{\Omega \cap U} |u|^2 {(-\varphi)^{\alpha}} dV <\infty
\end{equation}
for some $\alpha \leq -1$, then $u=0$ on $\Omega \cap U$.
\end{lemma}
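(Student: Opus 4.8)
The plan is to prove that $u$ vanishes on a nonempty open subset of $\Omega\cap U$; since $\Omega\cap U$ is connected and $u$ is holomorphic, the identity theorem then forces $u\equiv 0$. To produce such an open set I would work near a boundary point $p\in\partial\Omega\cap U$ (which exists by hypothesis) and reduce the statement to a one-variable boundary-uniqueness problem. Because $\varphi$ is a defining function, $d\varphi\neq 0$ near $p$, so after a complex-linear change of coordinates I may assume $\partial\varphi/\partial z_n\neq 0$ on a neighborhood of $p$. Writing $w=(z_1,\dots,z_{n-1})$ and slicing by the complex lines $L_w=\{(w,\zeta):\zeta\in\mathbb{C}\}$, Fubini's theorem applied to $(\ref{L2E1})$ shows that for almost every $w$ near the projection of $p$ the slice integral $\int_{L_w\cap\Omega\cap U}|u(w,\zeta)|^2(-\varphi(w,\zeta))^{\alpha}\,dA(\zeta)$ is finite. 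On each slice $\partial_\zeta\varphi\neq 0$, so $\{\varphi(w,\cdot)=0\}$ is a smooth real curve $\gamma_w$ and $-\varphi(w,\cdot)$ is comparable to the Euclidean distance to $\gamma_w$.

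Next I would straighten each slice. Let $D_w$ be the connected component of $L_w\cap\Omega\cap U$ whose boundary contains the arc of $\gamma_w$ near $p$, and let $\Psi_w:\mathbb{D}\to D_w$ be a Riemann map. Since $D_w$ has smooth boundary along that arc, $\Psi_w$ extends smoothly with $|\Psi_w'|$ bounded above and below near the corresponding arc $I_w\subset\partial\mathbb{D}$ (Kellogg--Warschawski); hence $1-|\zeta|$ is comparable to $\mathrm{dist}(\Psi_w(\zeta),\gamma_w)$ and $dA$ is distorted by the bounded factor $|\Psi_w'|^2$. Consequently $G:=u(w,\cdot)\circ\Psi_w$ is holomorphic on $\mathbb{D}$ and satisfies $\int_{\mathbb{D}}|G(\zeta)|^2(1-|\zeta|)^{\alpha}\,dA(\zeta)<\infty$ in a one-sided neighborhood of $I_w$. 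The problem is thus reduced to showing that such a $G$ vanishes.

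The heart of the matter is the one-variable claim. Its sharp form is transparent on the model where the weighted integral is taken over the whole disk: expanding $G=\sum_k a_k\zeta^k$ gives $\int_{\mathbb{D}}|G|^2(1-|\zeta|)^{\alpha}\,dA=2\pi\sum_k|a_k|^2\int_0^1 r^{2k+1}(1-r)^{\alpha}\,dr$, and each radial integral diverges precisely when $\alpha\leq -1$, forcing every $a_k=0$. For the genuine situation, where finiteness is known only near the arc $I_w$, I would argue by boundary values: for almost every direction $\theta\in I_w$ the finiteness of $\int_0^1|G(re^{i\theta})|^2(1-r)^{\alpha}\,dr$, together with a companion bound $\int_0^1|\partial_r G(re^{i\theta})|^2(1-r)^{\beta}\,dr<\infty$ for some $\beta<1$ (obtained from the area hypothesis via the Cauchy estimate $|G'(\zeta)|^2\lesssim(1-|\zeta|)^{-4}\int_{B(\zeta,(1-|\zeta|)/2)}|G|^2\,dA$ and a change of the order of integration), yields by the Cauchy--Schwarz inequality that the radial limit of $G$ exists and, because $(1-r)^{\alpha}$ is non-integrable, equals $0$. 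Thus $G$ has nontangential limit $0$ on the positive-measure arc $I_w$, and the Luzin--Privalov boundary-uniqueness theorem gives $G\equiv 0$, i.e. $u(w,\cdot)\equiv 0$ on $D_w$. Collecting these slices, $u$ vanishes on a dense, hence by continuity on a full, nonempty open subset of $\Omega\cap U$, and the identity theorem completes the proof.

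The main obstacle is this last one-variable step: upgrading mere integrability into a genuine nontangential limit and certifying that $G$ lies in a class (Nevanlinna, of bounded characteristic near $I_w$) to which the boundary-uniqueness theorem applies. The derivative estimate needed for the Cauchy--Schwarz argument degenerates exactly at $\alpha=-1$ (it requires $\beta\geq\alpha+2$ and $\beta<1$), which is consistent with the threshold being sharp, as the model computation shows; handling the borderline $\alpha=-1$ and justifying the function class rigorously is where the real work lies. By contrast, the multivariable reduction, the Riemann-mapping normalization, and the final analytic continuation are routine.
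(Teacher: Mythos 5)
Your multivariable reduction coincides with the paper's: both arguments slice $\Omega\cap U$ near a boundary point by a family of complex lines transverse to $\partial\Omega$, apply Fubini to get finiteness of the sliced weighted integral for almost every line, conclude that $u$ vanishes on almost every slice, and finish with the identity theorem on the connected set $\Omega\cap U$. The difference is the one-variable core. The paper simply quotes Lemma 5.1 of Ohsawa's book for the statement that a holomorphic function $g$ on a planar domain with $\int|g|^2\delta^{\alpha}\,dA<\infty$, $\alpha\le-1$, near a smooth boundary arc must vanish; you attempt to prove this from scratch, and it is exactly there that your argument has genuine gaps.

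Two concrete problems. First, the only case that actually needs proving is $\alpha=-1$: since $-\varphi$ is bounded on $\overline{\Omega}$ and small near $\partial\Omega$, finiteness of $\int|u|^2(-\varphi)^{\alpha}\,dV$ for some $\alpha<-1$ already implies finiteness for $\alpha=-1$, so the borderline exponent is not a corner case but the entire content of the lemma. Your derivative estimate requires an exponent $\beta$ with $\alpha+2\le\beta<1$, which is an empty range at $\alpha=-1$, as you yourself observe; hence the mechanism by which you produce radial limits fails precisely in the essential case. Second, even granting radial limits equal to $0$ almost everywhere on the arc $I_w$, this does not imply $G\equiv0$: Luzin and Privalov constructed a nonzero holomorphic function on the disk whose radial limit is $0$ at almost every boundary point. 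Their uniqueness theorem requires nontangential (angular) limits, or membership in a class (e.g.\ Nevanlinna) in which radial and nontangential convergence can be identified; you establish neither, so the step ``radial limit $0$ on a set of positive measure implies $G\equiv0$'' is unjustified. The cleanest repair is to do what the paper does and invoke the known one-variable result (Ohsawa, Lemma 5.1), or else to give a direct argument at $\alpha=-1$ that does not pass through boundary values.
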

\begin{proof}
Take a point $p \in \partial \Omega \cap U$. Consider a holomorphic coordinate such that $p=0$ and the unit outward normal vector $\vec{\nu}$ to $\partial \Omega$ at $p$ is $(0,\cdots,0,1)$. Take an open ball ${B}(p,r)$ which is centered at $p$ with sufficiently small radius $r>0$ such that ${B}(p,r) \subset U$.

Denote by $Z_q $ the complex line $ \{q + \lambda \vec{\nu} : \lambda \in \mathbb{C}\}$ for each $q \in \partial \Omega \cap {B}(p,r)$. By the Fubini theorem and $(\ref{L2E1})$,
\begin{equation*}
\int_{q \in E} \bigg( \int_{Z_q \cap \Omega \cap {B}(p,r)} |u|^2 (-\varphi)^{\alpha} d\lambda \bigg ) d\sigma \leq \int_{\Omega \cap {B}(p,r)} |u|^2 (-\varphi)^{\alpha} dV,
\end{equation*}
where $E\subset \partial\Omega\cap {B}(p,r)$ is a local parameter space for $Z_q$ of finite measure.
Note that $E$ can be chosen as $(2n-2)$-dimensional smooth surface in $\partial \Omega$ and any fiber $(p+\mathbb{C} \vec{\nu}) \cap \partial \Omega$ for $p\in E$ is transversal to $E$.
Then, we have
\begin{equation}\label{Integrable}
\int_{Z_q \cap \Omega \cap {B}(p,r)} |u|^2 (-\varphi)^{\alpha} d\lambda < \infty
\end{equation}
for $q \in E$ almost everywhere.
Since $Z_{q'}=Z_q$ if $q' \in Z_{q} \cap \partial \Omega$,
there exists an connected open set $V$ in $\partial \Omega$ such that
(\ref{Integrable}) holds
for $q \in \partial\Omega\cap V$ almost everywhere.
For such $q$, since $\alpha \leq -1$ and $u$ is holomorphic on $\Omega \cap U$, by Lemma 5.1 of \cite{Ohsawa}, $u=0$ on $Z_q \cap \Omega \cap {B}(p,r)$. Therefore, $u=0$ on $\Omega \cap U$.
\end{proof}

\emph{Proof of the Theorem 1.2.}
First, we assume that $\partial \Omega$ is smooth, $\varphi$ is smooth plurisubharmonic on $\overline{\Omega}$, $d\varphi\neq 0$ on $\partial\Omega$ and the distance between $\partial V\cap\Omega$ and $\textup{Supp}(i \partial \bar \partial \varphi)^{n-2}$ is positive. Take a neighborhood W of $\textup{Supp}(i \partial \bar \partial \varphi)^{n-2}$ such that
$W$ is contained in $V$, the distance between $\partial W\cap\Omega$ and $\textup{Supp}(i \partial \bar \partial \varphi)^{n-2}$ is positive, and $\partial V\cap \partial W\cap\Omega=\emptyset$.
Consider a real-valued smooth function $\chi$ on $\Omega$ which is equal to one on $\textup{Supp}(i \partial \bar \partial \varphi)^{n-2}$ and equal to zero on $\Omega-W$.

Choose $n-2<\delta<n-1$. By Lemma \ref{T1}, $\bar\partial(\chi f) \in L^2_{0,1}(\Omega, e^{\psi - \delta\phi}, \omega)$. Applying Proposition 4.1, we can find a function $u\in L^2(\Omega, e^{\psi - \delta\phi}, \omega)$ such that $\bar \partial u = \bar \partial (\chi f)$

Take a strictly pseudoconvex point $p\in \partial \Omega$ and a connected open set $U$ such that $p \in U \cap \partial \Omega \subset \Omega \cap \textup{Supp}\big( (i \partial \bar \partial \varphi)^{n-1} \wedge \partial \varphi \wedge \bar \partial \varphi \big)$.
Since $\omega = i (\partial \bar \partial \psi + \frac{\partial \bar \partial \varphi}{-\varphi} + \frac{\partial \varphi \wedge \bar \partial \varphi}{\varphi^2})$, we have
\begin{equation}\label{M2}
\int_{\Omega \cap U} |u|^2 e^{\psi-\delta \phi} (-\varphi)^{-(n+1)} dV \lesssim \int_{\Omega} |u|^2 e^{\psi - \delta \phi} dV_{\omega} <\infty.
\end{equation}
Since $\bar \partial u = 0$ on $\Omega \cap U$, by Lemma \ref{L2}, $u=0$ on $\Omega \cap U$. Therefore, the holomorphic function $\chi f - u$ on $\Omega$ coincides with $f$ on $V$ by the uniqueness of analytic continuation.

To prove the general case, we consider the subdomain $\Omega_{\epsilon} = \{ \varphi < -\epsilon \}$ of $\Omega$ with smooth boundary, where $\epsilon>0$. If $\epsilon$ is sufficiently small, $f$ has the holomorphic extension for each $\Omega_\epsilon$ by the previous argument. Due to analytic continuation, we have the desired holomorphic extension of $f$ on $\Omega$.
\begin{flushright}
$\Box$
\end{flushright}

\section*{Acknowledgments}
The first named author wishes to express gratitude to professor Kang-Tae Kim for his guidance. He is also grateful to professor Masanori Adachi for a discussion on this program. We are thankful to professor Yusaku Tiba and professor Masanori Adachi for comments to a manuscript of this article. We would also like to thank referees for their kind comments which were helpful for the improvement of the presentation of our paper. The work is a part of the first named author's Ph.D. thesis at Pohang University of Science and Technology.

\bibliographystyle{amsplain}

\end{document}